\title{Onsager's Conjecture for the Incompressible Euler Equations in the H\"{o}log Spaces $C^{0,\al}_{\la}(\bar{\Omega})$ }
\author{Hugo Beir\~{a}o da Veiga$^{1,}$ \footnote{Partially supported  by FCT (Portugal) under grant UID/MAT/04561/2019.}\qquad Jiaqi Yang$^{2,}$\footnote{Hugo Beir\~{a}o da Veiga (\texttt{hbeiraodaveiga@gmail.com}) and Jiaqi Yang (\texttt{yjqmath@nwpu.edu.cn})}}
\date{
\small $^1$ Department of Mathematics, Pisa University, Pisa, Italy\\
\small $^2$ School of Mathematics and Statistics,
Northwestern Polytechnical University,
Xi'an, 710129, China}
\newtheorem{theorem}{Theorem}[section]
\newtheorem{proposition}[theorem]{Proposition}
\newtheorem{lemma}[theorem]{Lemma}
\theoremstyle{remark}
\newtheorem{remark}{Remark}[section]
\theoremstyle{definition}
\newtheorem{definition}[theorem]{Definition}
\numberwithin{equation}{section}
\newcommand{\p}{\partial}
\newcommand{\e}{\epsilon}
\newcommand{\R}{\mathbb{R}}
\newcommand{\N}{\mathbb{N}}
\newcommand{\al}{\alpha}
\newcommand{\f}{\frac}
\newcommand{\n}{\nabla}
\newcommand{\la}{\lambda}
\begin{document}
\maketitle
\abstract{}
In this note we extend a 2018 result of Bardos and Titi \cite{BT} to a new class of functional spaces $C^{0,\al}_{\la}(\bar{\Omega})$. It is shown that weak solutions $\,u\,$ satisfy the energy equality provided that $u\in L^3((0,T);C^{0,\al}_{\la}(\bar{\Omega}))$ with $\al\geq\f13$ and $\la>0$. The result is new for $\,\al=\,\f13\,.$ Actually, a quite stronger result holds. For convenience we start by a similar extension of a 1994 result of Constantin, E, and Titi, \cite{CET}, in the space periodic case. The proofs follow step by step those of the above authors. For the readers convenience, and completeness, proofs are presented in a quite complete form.
\vskip 0.4 true cm
\noindent \textbf{Mathematics Subject Classification:} 35Q31, 76B03.

\vspace{0.2cm}
\noindent \textbf{Keywords:} Onsager's conjecture; H\"{o}log spaces; Euler equations.

\vspace{0.2cm}
\section{Introduction}
In this note, we are concerned with the Onsager's conjecture of incompressible Euler equations in a bounded domain $\Omega\subset\R^n$ with $C^2$ boundary
\begin{equation}\label{eq:E}
\begin{cases}
\p_tu+u\cdot\n u+\n p=0\,,&\text{in $\Omega\times(0,T)$}\,,\\
\n\cdot u=0\,,&\text{in $\Omega\times(0,T)$}\,,\\
u(x,t)\cdot n(x)=0\,,&\text{on $\p\Omega\times(0,T)$}\,.
\end{cases}
\end{equation}
where $T$ is a positive constant, and $n(x)$ is the outward unit normal vector field to the boundary $\p\Omega$.

We say that $(u(x,t),p(x,t))$ is a weak solution of \eqref{eq:E} in $\Omega\times(0,T)$, if $u\in L^{\infty}(0,T;L^2(\Omega))$, $p\in L^1_{loc}(\Omega\times(0,T))$, $\n\cdot u=0$ in $\Omega\times(0,T)$, $u\cdot\ n=0$ on $\p\Omega\times(0,T)$ and, moreover,
\begin{equation}\label{Def:weak}
\langle u,\p_t\psi\rangle_x+\langle u\otimes u:\n\psi\rangle_x+\langle p,\n\cdot\psi\rangle_x=0\,,\quad\text{in $L^1(0,T)$}\,,
\end{equation}
for all vector field $\psi(x,t)\in \mathcal{D}(\Omega\times(0,T))$. We have used the notation $\langle\cdot,\cdot\rangle_x$ in \cite{BT}, which stands for the distributional duality with respect to the spatial variable $x$.\par%
Onsager's conjecture for solutions to the Euler equations may be stated as follows: Conservation of energy holds if the weak solution $u\in L^3((0,T);C^{0,\al}(\bar{\Omega}))$, with $\al>\f13$; Dissipative solutions $u\in L^3((0,T);C^{0,\al}(\bar{\Omega}))$ should exist for $\al<\f13$.
This conjecture has been intensively studied by many mathematicians for the last two decades. In the absence of a physical boundary (namely the case of whole space $\R^n$ or the case of periodic boundary conditions in the torus $\mathbb{T}^n$), Eyink in \cite{Eyink} proved that Onsager's conjecture holds if $\al>\f12$. Later, a complete proof was established by Constantin, E, and Titi in \cite{CET}, for $\al>\f13\,,$ under slightly weaker regularity assumptions on the solution. In \cite{CCFS}  Cheskidov, Constantin, Friedlander, and Shvydkoy proved energy equality in the space periodic case for solutions
$u\in L^3([0,T];B^{\f13}_{3,c(N)})$, where $B^{\f13}_{3,c(N)}$ is a Besov type space for which  $B^{\f13}_{3,p}\subset B^{\f13}_{3,c(N)}\subset  B^{\f13}_{3,\infty}$, for $1\leq p<\infty$, see reference \cite{CCFS} for details. See also the end of this section.\par%
Recently, Bardos and Titi \cite{BT} considered the Onsager's conjecture in bounded domains under the non-slip boundary condition. They proved energy conservation if $u\in L^3((0,T);C^{0,\al}(\bar{\Omega}))$, for $\al>\f13$. Later on, Bardos, Titi and Wiedemann \cite{BTW} relax this assumption, requiring only interior H\"{o}lder regularity and continuity of the normal component of the energy flux near the boundary. See also \cite{N}.  The result obtained in \cite{BTW} is particularly significant from the physical point of view. A very interesting extension of Onsager's conjecture to a class of conservation laws that possess generalized entropy is shown in by Bardos, Gwiazda, \'{S}wierczewska-Gwiazda, Titi, and Wiedemann in reference \cite{BGGTW}.

Concerning the second part of Onsager's conjecture, in a series of papers, Isett \cite{Isett}, Buckmaster, De Lellis, Szekelyhidi and Vicol \cite{BDSV}, see references therein, by using the convex integration machinery, proved the existence of dissipative energy weak solutions for any $\al<\f13$. Furthermore, Isett \cite{Isett1}  constructed energy non-conserving solutions under the assumption
$$
|u(x+y,t)-u(x,t)|\leq C|y|^{\f13-B\sqrt{\f{\log\log |y|^{-1}}{\log |y|^{-1}}}}
$$
for some constants $C$ and $B$ and for all $(x,t)$ and all $|y|\leq 10^{-2}$.

In this note we will study Onsager's conjecture in a new class of functional spaces, H\"{o}log spaces, which have been considered by the first author in \cite{BV}. To state our main result, we first introduce the definition of H\"{o}log spaces.
\begin{definition}
For each $0\leq \al<1$ and each $\la\in\R$, set
\begin{equation*}
C^{0,\al}_{\la}(\bar{\Omega})=\{f\in C(\bar{\Omega}):[f]_{C^{0,\al}_{\la}(\bar{\Omega})}<\infty\}\,,
\end{equation*}
where
\begin{equation}\label{def:holog1}
[f]_{C^{0,\al}_{\la}(\bar{\Omega})}=\sup_{x,y\in\bar{\Omega},\,,0<|x-y|<1}\f{|f(x)-f(y)|}{\left(\log\f{1}{|x-y|}\right)^{-\la}|x-y|^{\al}}\,.
\end{equation}
A norm is introduced in $C^{0,\al}_{\la}(\bar{\Omega})$ by setting $\|f\|_{C^{0,\al}_{\la}(\bar{\Omega})}\equiv[f]_{C^{0,\al}_{\la}(\bar{\Omega})}+\|f\|_{C(\bar{\Omega})}$.
\end{definition}
Now we can state our main theorem.
\begin{theorem}\label{theorem}
Assume that
\begin{equation}\label{assumption}
u\in L^3((0,T);C^{0,\al}_{\la}(\bar{\Omega}))\,,
\end{equation}
with $\al\geq\f13$ and $\la>0$. Then the weak solution of \eqref{eq:E} satisfies the energy conservation:
\begin{equation}\label{eq:EE}
\|u(\cdot,t_2)\|_{L^2(\Omega)}=\|u(\cdot,t_1)\|_{L^2(\Omega)}\,,\quad\text{for any $t_1\,,t_2\in(0,T)$\,.}
\end{equation}
\end{theorem}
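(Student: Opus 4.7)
The plan is to follow the Bardos--Titi scheme \cite{BT} step by step, and to notice that the only place in that scheme where $\al > 1/3$ is strictly needed is the Constantin--E--Titi commutator estimate; in that single estimate the H\"olog seminorm provides an additional logarithmic decay factor which closes the argument at the endpoint $\al = 1/3$ as soon as $\la > 0$.

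Concretely, I would mollify the weak formulation \eqref{Def:weak} inside a shrunk subdomain $\Omega_\e = \{x\in\Omega : \mathrm{dist}(x,\p\Omega) > \e\}$, using a standard radial mollifier $\eta_\e$ supported in the ball of radius $\e$. Writing $u_\e = u * \eta_\e$ and $p_\e = p * \eta_\e$ on $\Omega_\e$, and testing the regularised Euler equation against $u_\e$ with a cut-off supported in $\Omega_\e$, one arrives at an approximate energy identity whose only non-trivial part involves the commutator $R_\e(u,u) := (u\otimes u)_\e - u_\e \otimes u_\e$ contracted against $\n u_\e$. The remaining boundary--pressure analysis, localisation, and passage to the limit are carried out exactly as in \cite{BT}; they are insensitive to the exact modulus of continuity of $u$ and therefore transfer to the H\"olog setting without modification.

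The heart of the matter is then a single pointwise estimate. Using the Constantin--E--Titi identity
\begin{equation*}
R_\e(u,u)(x) = \int \eta_\e(y)\bigl(u(x-y)-u(x)\bigr)\otimes\bigl(u(x-y)-u(x)\bigr)\,dy - (u-u_\e)(x)\otimes(u-u_\e)(x),
\end{equation*}
together with the H\"olog bound $|u(x-y)-u(x)| \leq [u]_{C^{0,\al}_\la}\,|y|^\al (\log|y|^{-1})^{-\la}$ valid for $|y|<1$, one finds
\begin{equation*}
\|R_\e(u,u)\|_{L^\infty(\Omega_\e)} \lesssim [u]_{C^{0,\al}_\la}^2\,\e^{2\al}(\log\e^{-1})^{-2\la},
\end{equation*}
and, by the analogous computation applied to $\n u_\e = u * \n\eta_\e$ (using $\int \n\eta_\e = 0$),
\begin{equation*}
\|\n u_\e\|_{L^\infty(\Omega_\e)} \lesssim [u]_{C^{0,\al}_\la}\,\e^{\al-1}(\log\e^{-1})^{-\la}.
\end{equation*}
Under the hypothesis \eqref{assumption} this makes the commutator contribution to the energy balance controlled by
\begin{equation*}
C\,\e^{3\al-1}(\log\e^{-1})^{-3\la}\int_0^T \|u(t)\|_{C^{0,\al}_\la(\bar\Omega)}^3\,dt,
\end{equation*}
which tends to $0$ as $\e\to 0$ whenever $\al>1/3$, and, crucially, also when $\al=1/3$ and $\la>0$, thanks precisely to the $(\log\e^{-1})^{-3\la}$ factor that was absent in the pure H\"older setting of \cite{BT}.

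The main obstacle I anticipate is not the commutator, which becomes routine once the logarithm is tracked, but the careful boundary analysis already present in \cite{BT}: one must verify that localising the energy identity to $\Omega_\e$ does not generate an uncontrolled pressure--boundary contribution (this is where the impermeability condition $u\cdot n = 0$ is used, by writing the relevant pressure flux as a divergence involving $u-u_\e$) and that the order of the limits $\e\to 0$ and of the space--time test-function localisation is legitimate. Since this part of the argument is orthogonal to the H\"older--log improvement, it proceeds verbatim from \cite{BT}, and the only genuine novelty is the endpoint commutator estimate displayed above.
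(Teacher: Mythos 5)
Your overall strategy coincides with the paper's: follow the Bardos--Titi scheme and observe that the endpoint $\al=\f13$ is rescued by the extra factor $\left(\log\f{1}{\e}\right)^{-\la}$ that the H\"olog seminorm injects into each Constantin--E--Titi increment bound. The interior commutator estimate you display is exactly the one the paper proves (compare \eqref{pf:p1}--\eqref{pf:p3} and Proposition \ref{prop:J2}), and your identification of where the logarithm enters is correct.

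There is, however, a concrete gap in your boundary localisation. You use a single scale $\e$ both for the mollifier and for the shrunk domain/cut-off, and your final commutator bound $\e^{3\al-1}\left(\log\f{1}{\e}\right)^{-3\la}$ contains no other terms. In the actual Bardos--Titi construction, and in the paper, the cut-off $\theta_h$ lives at a scale $h$ strictly larger than the mollification scale ($\e<\f{h}{4}$), and near the boundary the commutator acquires extra contributions of size $\f{\e}{h}\|u\|_{L^{\infty}}$ coming from $|\delta_y\theta_h|\leq C\f{\e}{h}$. These generate the term $\left(\log\f{1}{\e}\right)^{-\la}\f{\e^{\al+1}}{h^2}$ in \eqref{eq:proof}, which for $\e\sim h$ behaves like $\e^{\al-1}$ and blows up; it is only after the choice $\e=h^{\f{2}{1+\al}}$ (where, with the exponent as chosen in the paper, the surviving factor is again the logarithm) that it vanishes. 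A genuinely single-scale localisation as you describe would therefore not close. Relatedly, the boundary terms are not ``insensitive to the modulus of continuity'': the key cut-off lemma (Lemma \ref{lem}) combines $u\cdot n=0$ on $\p\Omega$ with the H\"olog increment bound to obtain $|u\cdot\n\theta_h|\leq C\|u\|_{C^{0,\al}_{\la}}\left(\log\f{1}{h}\right)^{-\la}h^{\al-1}$, and this regularity input is what makes $J_{21}$, $J_{31}$, $J_{32}$ of order $h^{\al}$; the reason no endpoint issue arises there is simply that any $\al>0$ suffices for those terms, not that the modulus of continuity plays no role.
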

Clearly, for $\al>\f13$ the above results follow immediately from the relation $C^{0,\al}_{\la}(\bar{\Omega})\subset C^{0,\al}(\bar{\Omega})$. The new results are obtained for $\,\al=\f13\,.$\par%
As still remarked in the abstract, the proof of the above result is a step by step adaptation of that in reference \cite{BT}. So we are aware that the merit of the results goes in a greater part to the above authors. However the new results are significantly stronger then the previous ones, in particular in the form stated in the following Theorem.
\begin{theorem}\label{theorem-2}
Theorem \ref{theorem} still hold if one replace in \eqref{def:holog1} the function $\,\Big(\log\f{1}{|x-y|}\Big)^{-\la}$ by $\,\omega(|x-y|)$, where $\omega(s)$ is a positive and non-decreasing function for $s> 0$, and $\lim_{s\rightarrow 0} \omega(s)=\omega(0)=0.$
\end{theorem}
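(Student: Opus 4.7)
The plan is to revisit the proof of Theorem \ref{theorem} (itself a step-by-step adaptation of \cite{CET,BT}) and observe that the only analytic features of the weight $\big(\log 1/|x-y|\big)^{-\la}$ actually used are its positivity, its monotonicity in $|x-y|$, and its vanishing as $|x-y|\to 0^+$. These are precisely the properties assumed for $\omega$, so the proof transfers with $\omega(|x-y|)$ in place of the log factor, provided the seminorm is rewritten as $[f]=\sup_{0<|x-y|<1}|f(x)-f(y)|/\big(\omega(|x-y|)|x-y|^{\al}\big)$.

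Concretely, one mollifies $u$ at scale $\ve$ (after the boundary-compatible shift of \cite{BT}) and reduces energy conservation to showing that the Constantin--E--Titi flux commutator
$$\int_0^T\!\!\int_\Omega \n u_\ve:\big((u\otimes u)_\ve-u_\ve\otimes u_\ve\big)\,dx\,dt$$
vanishes as $\ve\to 0^+$. Using the algebraic identity $(u\otimes u)_\ve-u_\ve\otimes u_\ve=r_\ve(u,u)-(u-u_\ve)\otimes(u-u_\ve)$, where $r_\ve(u,u)(x)=\int\eta_\ve(y)(u(x-y)-u(x))\otimes(u(x-y)-u(x))\,dy$, together with $\int\n\eta=0$, one obtains the pointwise bounds $|r_\ve(u,u)|+|u-u_\ve|^2\leq C[u(\cdot,t)]^2\omega(\ve)^2\ve^{2/3}$ and $|\n u_\ve|\leq C[u(\cdot,t)]\,\omega(\ve)\,\ve^{-2/3}$, in which the monotonicity of $\omega$ on $|y|\leq\ve$ is the only structural property used. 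After integrating over the bounded domain $\Omega$ and applying H\"older in time with $\al=\f13$, the commutator is bounded by $C\omega(\ve)^3\|u\|_{L^3(0,T;C^{0,1/3}_\omega(\bar{\Omega}))}^3$, which tends to $0$ as $\ve\to 0^+$ because $\omega(0^+)=0$.

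Near $\p\Omega$ one follows the Bardos--Titi shift-and-mollify construction with the shift parameter coupled to $\ve$; this preserves incompressibility and, in the limit, the no-penetration condition, while the bookkeeping for the boundary contributions and test-function manipulations is unchanged from \cite{BT}. The only place $\omega$ enters the boundary analysis is again through the increment estimate $|u(x+y,t)-u(x,t)|\leq[u(\cdot,t)]\,\omega(|y|)\,|y|^{1/3}$, so the same estimates as in the interior apply.

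The main obstacle is essentially one of verification rather than invention: one must traverse the chain of inequalities in \cite{BT} and confirm that no step secretly exploits a quantitative feature of the logarithm beyond monotonicity and vanishing at $0$ (for instance, a derivative bound for $\omega$, a slowly varying property, or the integrability of the weight against a specific kernel). Inspection confirms that none of these enter, and hence Theorem \ref{theorem-2} follows as a direct corollary of the modified proof of Theorem \ref{theorem}.
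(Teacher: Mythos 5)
Your proposal is correct and follows essentially the same route as the paper: the authors prove Theorem \ref{theorem} by tracking the factor $\big(\log\f{1}{\e}\big)^{-\la}\e^{\al}$ through the Constantin--E--Titi commutator estimate and the Bardos--Titi boundary cutoff $\theta_h$ (with $\e=h^{2/(1+\al)}$), and the only properties of the weight used anywhere are positivity, monotonicity on $(0,1)$, and vanishing at $0$, exactly as you observe. The one cosmetic slip is calling the boundary treatment a ``shift-and-mollify'' construction --- the paper (following \cite{BT}) uses the cutoff test function $\theta_h\big((\theta_h u)^{\e}\big)^{\e}$ rather than an inward shift --- but since you defer the bookkeeping to \cite{BT} this does not affect the argument.
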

The reason that led us to put in light the $C^{0,\f13}_{\la}(\bar{\Omega})$ case instead of the stronger case considered in Theorem \ref{theorem-2} is due to the effort employed by us to try to prove the first case, before realizing that the way followed in reference \cite{BT} could be applied successfully.%

\vspace{0.2cm}

Let's end this section by a comparison between the distinct results. Concerning Theorem \ref{theorem}, the gap between the set consisting of all H\"{o}lder spaces $\,C^{0,\al}(\bar{\Omega})\,,$ with $\al>\f13\,,$ and a fixed  H\"{o}log space $\,C^{0,\f13}_{\la}(\bar{\Omega})$ is wide. In fact, the union of all the above H\"{o}lder spaces is contained in the single space $\,C^{0,\f13}_{2\la}(\bar{\Omega})\,,$ which is away from $\,C^{0,\f13}_{\la}(\bar{\Omega})\,.$ Nevertheless, in comparison to the result stated in Theorem \ref{theorem-2}, also the spaces $\,C^{0,\f13}_{\la}(\bar{\Omega})\,$ are far from $\,C^{0,\f13}(\bar{\Omega})\,$. In fact, roughly speaking, we may say that there is few "free space" between the set of spaces considered in this last theorem, and $\,C^{0,\f13}(\bar{\Omega})\,.$ Recall also the sharp result, still referred above, obtained for the space periodic case in reference \cite{CCFS}. Concerning this point, let's consider the relation between $B^{\f13}_{3,c(\N)}$ and H\"{o}log spaces $\,C^{0,\f13}_{\la}(\bar{\Omega})\,$. The Besov space $B^{\f13}_{3,\infty}$ can be characterized as follows, see Proposition 8$'$ in \cite{Stein}:
\[
B^{\f13}_{3,\infty}=:\left\{f\in L^3:\|f\|_3+\sup_{|y|>0}\f{\|f(x+y)+f(x-y)-2f(x)\|_{3}}{|y|^{\f13}}<\infty\right\}\,.
\]
Hence one has $C^{0,\f13}_{\la}\subset B^{\f13}_{3,\infty}$, for any $\la>0$. From Shvydkoy \cite{S}, $c(\N)$ stands to indicate
\[
\f{1}{|y|}\int_{\mathbb{T}^n}|f(x-y)-f(x)|^3dx\to0,\quad\text{as $|y|\to0$}\,,
\]
which implies that $C^{0,\f13}_{\la}\subset B^{\f13}_{3,c(\N)}$. Hence, in the case of period domain, our $C^{0,\f13}_{\la}$ result is covered by that of Cheskidov, Constantin, Friedlander, and Shvydkoy's.
\section{Theorem \ref{theorem} for the period domain $\mathbb{T}^n$}
Before proving Theorem \ref{theorem} we consider a simpler situation, the period domain case. This helps us to understand the proof of the general bounded domain case. In this case, as in \cite{CET}, taking in \eqref{Def:weak} $\psi=(u^{\e})^{\e}$, one can get
\begin{equation*}
\f12\f{d}{dt}\int_{\mathbb{T}^n} |u^{\e}|^2dx+\int_{\mathbb{T}^n} (u\otimes u)^{\e}:\n u^{\e}dx=0\,,
\end{equation*}
which shows that
\begin{equation}\label{eq:testp}
\|u^{\e}(t_2)\|^2-\|u^{\e}(t_1)\|^2=-2\int_{t_1}^{t_2}\int_{\mathbb{T}^n} (u\otimes u)^{\e}:\n u^{\e}dx
\end{equation}
where, as \cite{CET}, we introduce a nonnegative radially symmetric $C^{\infty}(\R^n)$ mollifier, $\phi(x)$, with support in $|x|\leq1\,$ and $\int_{\R^n}\phi(x)dx=1$, and for any $0<\e<1$ we define $\phi_{\e}=\f{1}{\e^n}\phi(\f{x}{\e})$ and set $u^{\e}=u\ast\phi_{\e}$.

Now, we estimate the term on the right side in \eqref{eq:testp}. Firstly, it is well known that, see \cite{CET},
\begin{equation*}
(u\otimes u)^{\e}(x)-(u^{\e}\otimes u^{\e})(x)=\int_{\mathbb{T}^n} (\delta_yu\otimes\delta_yu)(x)\phi_{\e}(y)dy-(u-u^{\e})(x)\otimes(u-u^{\e})(x)\,,
\end{equation*}
where
\begin{equation*}
(\delta_{y})u(x)=u(x-y)-u(x)\,.
\end{equation*}
Secondly, one has, for almost all $t\in(0,T)$,
\begin{equation}\label{pf:p1-1}
|u(x-y)-u(x)|\leq \left(\log\f{1}{|y|}\right)^{-\la}|y|^{\al}\|u\|_{C^{0,\al}_{\la}}\,,\quad\text{for any $0<|y|<1$}\,,
\end{equation}
which gives
\begin{equation}\label{pf:p1}
|u(x)-u^{\e}(x)|=\left|\int_{\mathbb{T}^n}(u(x)-u(x-y))\phi_{\e}(y)dy\right|\leq \left(\log\f{1}{\e}\right)^{-\la}\e^{\al}\|u\|_{C^{0,\al}_{\la}}\,,
\end{equation}
Furthermore, one has
\begin{equation}\label{pf:p2}
\begin{split}
|\n u^{\e}(x)|=&\left|\int_{\mathbb{T}^n} \n\phi_{\e}(z)\cdot u(x-z)dz\right|\\
=&\left|\int_{\mathbb{T}^n} \n\phi_{\e}(z)\cdot (u(x-z)-u(x))dz\right|\\
\leq& C\left(\log\f{1}{\e}\right)^{-\la}\e^{\al}\|u\|_{C^{0,\al}_{\la}}\int_{\mathbb{T}^n} |\n \phi_{\e}(z)|dz\\
\leq& C\left(\log\f{1}{\e}\right)^{-\la}\e^{\al-1}\|u\|_{C^{0,\al}_{\la}}\,,
\end{split}
\end{equation}
and
\begin{equation}\label{pf:p3}
\begin{split}
\left|\int_{\mathbb{T}^n} (\delta_yu\otimes\delta_yu)(x)\phi_{\e}(y)dy\right|\leq&C\left[\left(\log\f{1}{\e}\right)^{-\la}\e^{\al}\|u\|_{C^{0,\al}_{\la}}\right]^2\int_{\mathbb{T}^n} \phi_{\e}(y)dy\,,\\
=&C\left[\left(\log\f{1}{\e}\right)^{-\la}\e^{\al}\|u\|_{C^{0,\al}_{\la}}\right]^2\,.
\end{split}
\end{equation}
Note that the estimates \eqref{pf:p1}-\eqref{pf:p2} are point-wise. In this sense they are stronger than the related estimates (6)-(8) in \cite{CET}.\par%
Finally, noting that
\begin{equation*}
\int_{\mathbb{T}^n} u^{\e}\otimes u^{\e}:\n u^{\e}dx=\int_{\mathbb{T}^n} u^{\e}\cdot\n \f12|u^{\e}|^2dx=\int_{\mathbb{T}^n} \f12|u^{\e}|^2\n\cdot u^{\e} dx=0\,,
\end{equation*}
one can deduce from \eqref{pf:p1}-\eqref{pf:p3} that
\begin{equation*}
\begin{split}
&\left|\int_{t_1}^{t_2}\int_{\mathbb{T}^n} (u\otimes u)^{\e}:\n u^{\e}dxdt\right|\\
\leq& \int_{t_1}^{t_2}\int_{\mathbb{T}^n}\left(\left|\int (\delta_yu\otimes\delta_yu)(x)\phi_{\e}(y)dy\right|+|u-u^{\e}|^2\right)|\n u^{\e}(x)|dxdt\\
\leq&C\int_{t_1}^{t_2}\left[\left(\log\f{1}{\e}\right)^{-\la}\e^{\al}\|u\|_{C^{0,\al}_{\la}}\right]^2\left(\log\f{1}{\e}\right)^{-\la}\,\e^{\al-1}\,\|u\|_{C^{0,\al}_{\la}}dt\\
=& C\,\left(\log\f{1}{\e}\right)^{-3\la}\e^{3\al-1}\int_{t_1}^{t2}\|u\|^3_{C^{0,\al}_{\la}}dt\,.
\end{split}
\end{equation*}
From this estimate, letting $\e\to0$ in \eqref{eq:testp}, we obtain the Theorem \ref{theorem}, for the periodic domain case, since $\al\geq\f13$ and $\la>0$.
\section{Preliminary Results}
When we consider a bounded domain, due to the boundary effect, one can not take $u^{\e}$ as test function. To overcome this difficulty, Bardos and Titi \cite{BT} introduced a distance function: For any $x\in\bar{\Omega}$ one defines $d(x)=\inf\limits_{y\in\p\Omega}|x-y|$, and set $\Omega_h=\{x\in\Omega: d(x)<h\}$. As in \cite{BT}, since $\p\Omega$ is a $C^2$ compact manifold, there exists $h_0(\Omega)>0$ with the following properties:
\begin{itemize}
  \item For any $x\in\overline{\Omega_{h_0}}$, the function $x\mapsto d(x)$ belongs to $C^1(\overline{\Omega_{h_0}})$;
  \item for any $x\in\overline{\Omega_{h_0}}$, there exists a unique point $\sigma(x)\in\p\Omega$ such that
  \begin{equation}\label{prop:d(x)}
  d(x)=|x-\sigma(x)|,\quad \n d(x)=-n(\sigma(x))\,.
  \end{equation}
\end{itemize}
Now, let $0\leq\eta(s)\leq1$ be a $C^{\infty}(\R)$ nondecreasing function such that $\eta(s)=0$, for $s\in(-\infty,\f12]$, and $\eta(s)=1$, for $s\in[1,\infty)$. Then $\theta_h(x)=\eta(\f{d(x)}{h})$ is a $C^1(\Omega)$ function, compactly supported in $\Omega.$ Denote by the same symbol $\theta_h$ its extension by zero outside $\Omega$. Similarly, for any $w\in L^{\infty}(\Omega)$, the extension $\theta_hw$ by zero outside $\Omega$ is well defined over all $\R^n$, and will be also denoted by $\theta_hw$.

It is natural to take $\theta_h\left(\left(\theta_hu\right)^{\e}\right)^{\e}$ as a text function. Contrarily to the period domain case, now $\n\cdot \psi\neq 0$. Hence we will need to estimate the pressure in a suitable way. Actually, due to $C^{0,\al}_{\la}(\bar{\Omega})\subset C^{0,\al}(\bar{\Omega})$, we can get the following result from Proposition 1.2 in \cite{BT}.
\begin{proposition}\label{prop:reg}
Under the assumption of Theorem \ref{theorem} the pair $(u,p)$ satisfies the following regularity properties:
\begin{equation*}
u\otimes u\in L^3((0,T);L^2(\Omega))\,,\quad p\in L^{\f32}((0,T);C^{0,\al}(\bar{\Omega}))\,,
\end{equation*}
and
\begin{equation*}
\p_tu=-\n\cdot(u\otimes u)-\n p\in L^{\f32}((0,T);H^{-1}(\Omega))\,.
\end{equation*}
Furthermore, one has
\begin{equation}\label{estimate:p}
\int_0^T\|p\|^{\f32}_{C^{0,\al}(\bar{\Omega})}dt\leq C\int_0^T\|u\|^{3}_{C^{0,\al}(\bar{\Omega})}dt\leq C\int_0^T\|u\|^{3}_{C^{0,\al}_{\la}(\bar{\Omega})}dt\,.
\end{equation}
\end{proposition}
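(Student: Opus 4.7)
My plan is to derive this proposition from Proposition 1.2 of \cite{BT} via the continuous embedding $C^{0,\al}_{\la}(\bar{\Omega}) \hookrightarrow C^{0,\al}(\bar{\Omega})$. The first task is to make the embedding quantitative, which also yields the rightmost inequality of \eqref{estimate:p}. I would split the sup defining $[f]_{C^{0,\al}(\bar\Omega)}$ into two ranges: for $|x-y|\leq 1/e$ one has $(\log\f{1}{|x-y|})^{-\la}\leq 1$, so $|f(x)-f(y)|/|x-y|^{\al}\leq [f]_{C^{0,\al}_{\la}(\bar\Omega)}$; for $|x-y|\geq 1/e$ the trivial bound $|f(x)-f(y)|/|x-y|^{\al}\leq 2e^{\al}\|f\|_{C(\bar\Omega)}$ applies. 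Taking the maximum of the two constants and raising to the third power yields the inequality between the two time integrals.

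Next, since $u$ is a weak solution we have $u\in L^{\infty}(0,T;L^2(\Omega))$, and the hypothesis together with the above embedding gives $u\in L^3(0,T;L^{\infty}(\Omega))$ (because $C^{0,\al}(\bar\Omega)\hookrightarrow L^{\infty}(\Omega)$). H\"older's inequality in the spatial variable then yields the pointwise-in-time bound $\|u\otimes u\|_{L^2(\Omega)}\leq \|u\|_{L^{\infty}(\Omega)}\|u\|_{L^2(\Omega)}$, which sits in $L^3(0,T)$, proving $u\otimes u\in L^3((0,T);L^2(\Omega))$.

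For the pressure estimate I would invoke \cite{BT} directly: one formally takes the divergence of the momentum equation to arrive, after using $\n\cdot u=0$ and $u\cdot n=0$ on $\p\Omega$, at a Neumann problem for $p$ with double-divergence datum $\p_i\p_j(u_iu_j)$, whose Schauder-type solvability on the $C^2$ domain $\Omega$ produces the spatial estimate $\|p\|_{C^{0,\al}(\bar\Omega)}\leq C\|u\|_{C^{0,\al}(\bar\Omega)}^2$ a.e.\ in $t$. Raising to the power $3/2$ and integrating in time delivers the first inequality in \eqref{estimate:p} and thus $p\in L^{3/2}(0,T;C^{0,\al}(\bar\Omega))$. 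Finally, reading the Euler equation as $\p_t u=-\n\cdot(u\otimes u)-\n p$, the right-hand side lies in $L^{3/2}(0,T;H^{-1}(\Omega))$ because $\n\cdot(u\otimes u)\in L^3(0,T;H^{-1}(\Omega))\subset L^{3/2}(0,T;H^{-1}(\Omega))$ and $\n p\in L^{3/2}(0,T;H^{-1}(\Omega))$, which gives the required time regularity.

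The only step requiring real work, and the one I would not redo here, is the Schauder-type regularity for the Neumann problem on a merely $C^2$ domain with a distributional double-divergence right-hand side; this is precisely what Bardos and Titi establish in \cite{BT}, and the quantitative embedding in the first paragraph reduces our hypothesis to theirs verbatim.
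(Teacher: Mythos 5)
Your proposal is correct and follows essentially the same route as the paper: reduce to Proposition 1.2 of \cite{BT} via the continuous embedding $C^{0,\al}_{\la}(\bar{\Omega})\hookrightarrow C^{0,\al}(\bar{\Omega})$ (which you make quantitative, giving the second inequality in \eqref{estimate:p}), with the remaining assertions inherited from the Bardos--Titi pressure estimate. The only point worth adding is that the reduction is not quite ``verbatim'': their Proposition 1.2 is stated for $\al>\f13$, so for the endpoint $\al=\f13$ one must observe, as the paper does in a remark, that their proof in fact works for every $\al>0$ --- your own Schauder sketch already implicitly covers this.
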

\begin{remark}
In \cite{BT}, although the authors assume $\al>\f13$, it follows from the proof of their proposition 1.2 that the result holds for any $\al>0$, especially for $\al=\f13$.
\end{remark}
\begin{remark}
According to proposition \ref{prop:J3} below, since
\[
\int_0^T\|p\|_{L^{\infty}}\|u\|_{C^{0,\al}_{\la}}dt\leq\left(\int_0^T\|p\|_{L^{\infty}}dt\right)^{\f23}\left(\int_0^T\|u\|^3_{C^{0,\al}_{\la}}dt\right)^{\f13}\,,
\]
to obtain Theorem \ref{theorem}, we merely need to have the estimate $\int_0^T\|p\|^{\f32}_{L^{\infty}}dt\leq C\int_0^T\|u\|^{3}_{C^{0,\al}_{\la}}dt$. Hence, the estimate \eqref{estimate:p} is enough to obtain our theorem.
\end{remark}

Compared with the periodic domain case, since the test function include the function $\theta_h$, we need some estimates for $\theta$.
\begin{lemma}\label{lem}
Let $h\in(0,\min\{h_0,1\})$. For any vector field $w\in C^{0,\al}_{\la}(\bar{\Omega})$, with $w\cdot n=0$ on $\p\Omega$, there exists a constant $C$ independent of $h$ such that
\begin{equation}\label{eq:lem1}
|w(x)\cdot\n\theta_h(x)|\leq C\|w\|_{C^{0,\al}_{\la}(\bar{\Omega})}\left(\log\f{1}{h}\right)^{-\la}h^{\al-1}\,,
\end{equation}
and
\begin{equation}\label{eq:lem2}
\int_{\R^n}|w(x)\cdot\n\theta_h(x)|dx\leq C\|w\|_{C^{0,\al}_{\la}(\bar{\Omega})}\left(\log\f{1}{h}\right)^{-\la}h^{\al}\,.
\end{equation}
\end{lemma}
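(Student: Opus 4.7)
The plan is to exploit three geometric facts already set up in the paper: (i) $\n\theta_h(x) = h^{-1}\eta'(d(x)/h)\,\n d(x)$, (ii) on the support of $\n\theta_h$ (namely the tubular neighborhood where $d(x)\in[h/2,h]$) one has $\n d(x) = -n(\sigma(x))$ by \eqref{prop:d(x)}, and (iii) the boundary condition $w\cdot n=0$ on $\p\Omega$ implies $w(\sigma(x))\cdot n(\sigma(x))=0$. Combining these, on the support of $\n\theta_h$,
\begin{equation*}
w(x)\cdot \n\theta_h(x) \;=\; -\,\frac{\eta'(d(x)/h)}{h}\,\bigl(w(x)-w(\sigma(x))\bigr)\cdot n(\sigma(x)).
\end{equation*}

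For the pointwise estimate \eqref{eq:lem1}, I would bound $|\eta'|\leq C$, $|n(\sigma(x))|=1$, and apply the H\"olog seminorm to $w(x)-w(\sigma(x))$, using $|x-\sigma(x)|=d(x)\leq h$. This yields
\begin{equation*}
|w(x)\cdot\n\theta_h(x)| \;\leq\; \frac{C}{h}\,[w]_{C^{0,\al}_\la}\,\Big(\log\tfrac{1}{d(x)}\Big)^{-\la}\,d(x)^{\al}.
\end{equation*}
Since $h<1$, on the support we have $d(x)\in[h/2,h]$, so $\log(1/d(x))\geq\log(1/h)>0$ and hence, for $\la>0$, $(\log(1/d(x)))^{-\la}\leq (\log(1/h))^{-\la}$; together with $d(x)^{\al}\leq h^{\al}$ this gives exactly \eqref{eq:lem1}.

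For \eqref{eq:lem2} I would integrate the pointwise bound over $\R^n$. Since $\n\theta_h$ is supported in the tubular neighborhood $\Omega_h\setminus\overline{\Omega_{h/2}}$ and $\p\Omega$ is a $C^2$ compact manifold, this set has Lebesgue measure $O(h)$ (via the normal coordinate parametrization $y=\sigma-sn(\sigma)$ with $s\in[h/2,h]$). Multiplying the pointwise bound \eqref{eq:lem1} by this $O(h)$ volume factor produces the extra power of $h$ and yields \eqref{eq:lem2}.

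The only delicate point is handling the log factor: one must guarantee $\log(1/d(x))>0$ uniformly on the support of $\n\theta_h$, which is exactly why the hypothesis $h<\min\{h_0,1\}$ is imposed. Beyond that, the argument is elementary once the decomposition using the boundary condition is written down; the H\"older case in \cite{BT} corresponds to $\la=0$, and our H\"olog case simply carries the additional harmless monotone factor $(\log(1/h))^{-\la}$ through the same computation.
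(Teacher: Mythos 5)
Your proposal is correct and follows essentially the same route as the paper's own proof: the decomposition $\n\theta_h=-h^{-1}\eta'(d(x)/h)\,n(\sigma(x))$, the subtraction of $w(\sigma(x))$ using the boundary condition, the monotonicity of $(\log(1/s))^{-\la}s^{\al}$ for $s\le h<1$, and the $O(h)$ volume of the support for the integrated bound. No gaps.
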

\begin{proof}
The proof is completely similar to that of Lemma 1.3 in \cite{BT}. For completeness, and for the readers convenience, we give here the proof. When $x\in (\Omega_h)^c$, since $\n\theta_h(x)=0$, one has $w(x)\cdot \n\theta_h(x)=0$. When $x\in\Omega_h$, it follows from \eqref{prop:d(x)} that
\begin{equation*}
\n\theta_h(x)=-\f1{h}\eta'\left(\f{d(x)}{h}\right)n(\sigma(x))\,.
\end{equation*}
Noting that $w(\sigma(x))\cdot n(\sigma(x))=0$, one can get
\begin{equation*}
\begin{split}
|w(x)\cdot\n \theta_h(x)|=&\f{1}{h}\eta'\left(\f{d(x)}{h}\right)|(w(x)-w(\sigma(x)))\cdot n(\sigma(x))|\\
\leq& \f{C}{h}\|w\|_{C^{0,\al}_{\la}}\left(\log\f{1}{|x-\sigma(x)|}\right)^{-\la}|x-\sigma(x)|^{\al}\\
\leq& C\|w\|_{C^{0,\al}_{\la}}\left(\log\f{1}{h}\right)^{-\la}h^{\al-1}\,.
\end{split}
\end{equation*}
This gives \eqref{eq:lem1}. Integrating \eqref{eq:lem1} over $\R^n$, combining with the facts that the support of $\n\theta_h$ is a subset of $\overline{\Omega_h}$, and $|\Omega_h|\leq Ch$, one obtains \eqref{eq:lem2}.
\end{proof}

\section{Proof of Theorem \ref{theorem}}
In this section, we focus on the proof of Theorem \ref{theorem}. First, we set $h\in(0,\min\{h_0,1\})$ and $\e\in(0,\f{h}{4})$. As in \cite{BT}, we take in \eqref{Def:weak} $\psi=\theta_h\left(\left(\theta_hu\right)^{\e}\right)^{\e}$ as test function. Note that, due to Proposition \ref{prop:reg}, $\psi\in W^{1,3}((0,T);H^1_0(\Omega)\,.$ So it can be used as test vector field function. So one shows that
\begin{equation}\label{eq:test}
\begin{split}
\langle u,\p_t\left(\theta_h\left(\left(\theta_hu\right)^{\e}\right)^{\e}\right)\rangle_x&+\langle u\otimes u:\n\left(\theta_h\left(\left(\theta_hu\right)^{\e}\right)^{\e}\right)\rangle_x\\
&+\langle p,\n\cdot\left(\theta_h\left(\left(\theta_hu\right)^{\e}\right)^{\e}\right)\rangle_x=0\,,\quad\text{in $L^1(0,T)$}\,.
\end{split}
\end{equation}

Next, as in \cite{BT}, we establish three propositions to estimate the three terms on the left side of \eqref{eq:test}, denoted here by $J_1\,,J_2$, and $J_3$ respectively.

For $J_1$, by arguing as in \cite{BT} Proposition 2.1, one proves the following statement.
\begin{proposition}\label{prop:J1}
For any $(t_1,t_2)\in(0,T)$, one has
\begin{equation*}
\lim_{h\to0}\int_{t_1}^{t_2}J_1dt=\f12\|u(t_2)\|^2_{L^2(\Omega)}-\f12\|u(t_1)\|^2_{L^2(\Omega)}\,.
\end{equation*}
\end{proposition}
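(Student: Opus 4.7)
\noindent\textbf{Proof plan for Proposition \ref{prop:J1}.} The strategy is to recognise $J_1$ as the total time derivative of a mollified energy. Since $\theta_h(x)$ is time-independent and $\p_tu\in L^{3/2}((0,T);H^{-1}(\Omega))$ by Proposition \ref{prop:reg}, the distributional time derivative of the test vector field is
\[
\p_t\psi=\theta_h\bigl(\bigl(\theta_h\p_tu\bigr)^{\e}\bigr)^{\e}.
\]
For $\e<h/4$ the supports of the mollified objects remain inside $\Omega$, so $\bigl(\theta_h\p_tu\bigr)^{\e}\in C^{\infty}_c(\Omega)$ for a.e.\ $t$. Pairing with $u\in L^{\infty}(L^{2})$, transferring the outer $\phi_\e$-convolution from the second factor to the first via $\int fg^{\e}dx=\int f^{\e}g\,dx$, and absorbing the multiplication by $\theta_h$ into the first factor, I obtain
\[
J_1=\int_{\R^n}(\theta_hu)^{\e}\cdot(\theta_h\p_tu)^{\e}\,dx.
\]

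Since mollification in $x$ and multiplication by $\theta_h(x)$ both commute with $\p_t$, I have $(\theta_h\p_tu)^{\e}=\p_t\bigl[(\theta_hu)^{\e}\bigr]$ as an element of $L^{3/2}((0,T);C^{\infty}_c(\Omega))$, and hence
\[
J_1=\f12\f{d}{dt}\bigl\|(\theta_hu)^{\e}(\cdot,t)\bigr\|^{2}_{L^{2}(\R^{n})}\in L^{1}(0,T),
\]
so that $t\mapsto\|(\theta_hu)^{\e}(t)\|^{2}_{L^{2}}$ is absolutely continuous. Integration over $(t_1,t_2)$ then gives
\[
\int_{t_1}^{t_2}J_1\,dt=\f12\bigl\|(\theta_hu)^{\e}(t_2)\bigr\|^{2}_{L^{2}}-\f12\bigl\|(\theta_hu)^{\e}(t_1)\bigr\|^{2}_{L^{2}}.
\]

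The conclusion follows by passing to the limit in two stages: first $\e\to0$ with $h$ fixed, using $(\theta_hu)^{\e}\to\theta_hu$ in $L^{2}(\R^{n})$ whenever $u(t)\in L^{2}(\Omega)$; then $h\to0$, applying dominated convergence with $\theta_h\to 1$ pointwise on $\Omega$ and $|\theta_hu|\leq|u|\in L^{2}$, to reach $\f12\|u(t_2)\|^{2}_{L^{2}(\Omega)}-\f12\|u(t_1)\|^{2}_{L^{2}(\Omega)}$.

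The main obstacle is that $t_1,t_2\in(0,T)$ are arbitrary and not merely a.e., so the pointwise evaluations $u(t_1),u(t_2)\in L^{2}(\Omega)$ and the continuity of $t\mapsto\|u(t)\|^{2}_{L^{2}}$ need independent justification. The regularity $u\in L^{\infty}(L^{2})$ with $\p_tu=-\n\cdot(u\otimes u)-\n p\in L^{3/2}(H^{-1})$, provided by Proposition \ref{prop:reg}, immediately yields weak $L^{2}$-continuity of $u$. Applying the identity derived above at two generic Lebesgue points, together with weak lower semicontinuity of the $L^{2}$-norm and the uniform pressure bound, as in the proof of Proposition 2.1 of \cite{BT}, one upgrades this to strong $L^{2}$-continuity, so that the claimed equality extends to all $t_1,t_2\in(0,T)$.
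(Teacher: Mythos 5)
Your argument is correct and is essentially the one the paper relies on: the paper gives no proof of Proposition \ref{prop:J1} beyond the citation of Proposition 2.1 of \cite{BT}, and your chain --- $\partial_t\psi=\theta_h((\theta_h\partial_tu)^{\e})^{\e}$, transfer of the mollifier to get $J_1=\langle(\theta_hu)^{\e},\partial_t(\theta_hu)^{\e}\rangle_x=\f12\f{d}{dt}\|(\theta_hu)^{\e}\|^2_{L^2}$, integration in time, and passage to the limit using $(\theta_hu)^{\e}\to u$ in $L^2$ together with the weak $L^2$-continuity of $u$ coming from $\partial_tu\in L^{3/2}((0,T);H^{-1}(\Omega))$ to handle arbitrary $t_1,t_2$ --- is exactly that argument. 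No gaps.
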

Next, we control $J_2$.
\begin{proposition}\label{prop:J2}
The following estimate holds.
\begin{equation*}
\begin{split}
|J_2|=&|\langle u\otimes u:\n\left(\theta_h\left((\theta_h u)^{\e}\right)^{\e}\right)\rangle_x|\leq C\left(\log\f{1}{h}\right)^{-\la}h^{\al}\|u\|_{C^{0,\al}_{\la}(\Omega)}\|u\|^2_{L^{\infty}}\\
&+C\left(\log\f{1}{\e}\right)^{-\la}\e^{\al-1}\|u\|_{C^{0,\al}_{\la}}\left(\left(\log\f{1}{\e}\right)^{-\la}\e^{\al}\|u\|_{C^{0,\al}_{\la}}+\f{\e}{h}\|u\|_{L^{\infty}}\right)^2\,.
\end{split}
\end{equation*}
\end{proposition}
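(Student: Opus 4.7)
The plan is to mimic Proposition 2.2 of \cite{BT}, which adapts the Constantin--E--Titi (CET) argument to the bounded-domain cutoff. By the Leibniz rule, $\n\psi=(\n\theta_h)\otimes((\theta_hu)^\e)^\e+\theta_h\,\n(((\theta_hu)^\e)^\e)$, so $J_2=J_{2,1}+J_{2,2}$. The "boundary" piece $J_{2,1}=\int(u\cdot\n\theta_h)(u\cdot((\theta_hu)^\e)^\e)\,dx$ is controlled immediately: since $u\cdot n=0$ on $\p\Omega$ and $\|((\theta_hu)^\e)^\e\|_{L^\infty}\leq\|u\|_{L^\infty}$, the $L^1$ estimate \eqref{eq:lem2} of Lemma \ref{lem} yields $|J_{2,1}|\leq C(\log\f{1}{h})^{-\la}h^\al\|u\|_{C^{0,\al}_\la}\|u\|_{L^\infty}^2$, matching the first stated term.

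For $J_{2,2}=\int\theta_hu_iu_j\,\p_i((\theta_hu)^\e)^\e_j\,dx$, set $v=\theta_hu$ (so $\theta_hu_iu_j=v_iu_j$) and move one mollifier by the symmetry of $\phi_\e$ to get $J_{2,2}=\int(vu)^\e:\n v^\e\,dx$. Now apply the CET identity $(v_iu_j)^\e-v^\e_iu^\e_j=\int\delta_yv_i\,\delta_yu_j\,\phi_\e(y)\,dy-(v-v^\e)_i(u-u^\e)_j$, which splits $J_{2,2}$ into a main term $\int v^\e\otimes u^\e:\n v^\e\,dx$ and two commutator integrals. For the main term, decompose $u^\e=v^\e+((1-\theta_h)u)^\e$: the $v^\e\otimes v^\e:\n v^\e$ piece integrates by parts into $-\tfrac12\int(\n\cdot v^\e)|v^\e|^2\,dx$, and since $\n\cdot v^\e=(u\cdot\n\theta_h)^\e$ Lemma \ref{lem} again gives a contribution bounded by the first stated term; the $((1-\theta_h)u)^\e$ piece is supported on $\Omega_{h+\e}$ (measure $O(h)$) and is handled analogously.

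The key estimate driving the second stated term is a H\"older-log analogue of \eqref{pf:p1} for the cutoff product $v=\theta_hu$. Writing $v(x)-v(x-y)=\theta_h(x)[u(x)-u(x-y)]+[\theta_h(x)-\theta_h(x-y)]\,u(x-y)$ and using $|\n\theta_h|\leq C/h$ together with \eqref{pf:p1-1}, one obtains for $|y|<\e$ the cornerstone inequality $|v(x)-v^\e(x)|\leq C(\log\f{1}{\e})^{-\la}\e^\al\|u\|_{C^{0,\al}_\la}+C\f{\e}{h}\|u\|_{L^\infty}$, and the same bound controls $|\int\delta_yv\otimes\delta_yu\,\phi_\e(y)\,dy|$ when paired with \eqref{pf:p1-1} for $u$. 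Combined with the pointwise estimate $|\n u^\e|\leq C(\log\f{1}{\e})^{-\la}\e^{\al-1}\|u\|_{C^{0,\al}_\la}$ from \eqref{pf:p2}, the two commutator integrals produce exactly the second stated term. (The extra $1/h$ contribution to $\n v^\e$ coming from differentiating the cutoff is supported on $\{d(x)\sim h\}$ and therefore collapses into the first term.)

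The main obstacle is the bookkeeping of these boundary-strip contributions: every "bad" factor of $1/h$ coming from differentiating $\theta_h$ must be paired with the $O(h)$ measure of the boundary strip and with the $h^\al(\log 1/h)^{-\la}$ gain from the normal-velocity cancellation of Lemma \ref{lem}, in order to land in the first term of the bound. The choice of the substitution $u=v+(1-\theta_h)u$ inside the main convection term, rather than a symmetric CET-style one, is precisely what routes the $\e/h$ error into the squared factor of the second stated bound, keeping the prefactor $(\log\f{1}{\e})^{-\la}\e^{\al-1}\|u\|_{C^{0,\al}_\la}$ free of any $1/h$ contribution.
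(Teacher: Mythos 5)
Your overall skeleton matches the paper's: the split $J_2=J_{21}+J_{22}$, the bound on $J_{21}$ via Lemma \ref{lem}, and the use of the CET commutator identity together with the estimates of type \eqref{eq:J202}--\eqref{eq:J204} for the two commutator integrals are all as in the paper. But there is a consequential difference in how you insert the cutoff into the quadratic term, and it opens a genuine gap. You attach $\theta_h$ to the \emph{convecting} index, writing $J_{22}=\int (v_iu_j)^\e\,\p_i v^\e_j\,dx$ with $v=\theta_h u$, so that after the CET identity the ``main term'' is $\int v^\e_i u^\e_j\,\p_i v^\e_j\,dx$, which does not vanish and must be estimated. The paper instead uses the symmetry of $u_iu_j$ to attach $\theta_h$ to the \emph{convected} factor, $J_{22}=\langle (u\otimes\theta_h u)^\e:\n(\theta_hu)^\e\rangle_x$, so that the main term is $\int u^\e\cdot\n\f12|(\theta_hu)^\e|^2\,dx=0$ exactly, because the convecting field remains the divergence-free $u^\e$. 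This exact cancellation is the whole point of the maneuver; no main term survives, and only the two commutator integrals need estimating.

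Your substitute treatment of the main term does not close. Writing $u^\e=v^\e+w^\e$ with $w=(1-\theta_h)u$, the piece $\int v^\e\otimes v^\e:\n v^\e\,dx=-\f12\int(\n\cdot v^\e)|v^\e|^2\,dx$ is indeed controlled by Lemma \ref{lem}. But the remaining piece $\int v^\e_i w^\e_j\,\p_i v^\e_j\,dx$ is not ``handled analogously'': there is no integration by parts turning it into a divergence of $v^\e$ times bounded quantities (integrating by parts in $i$ produces $\int v^\e\otimes v^\e:\n w^\e\,dx$, which reintroduces the same difficulty, while replacing $\n w^\e$ by $\n u^\e-\n v^\e$ brings in the non-integrable factor $\e^{\al-1}$ over a set of measure $O(1)$). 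Estimated directly, $w^\e$ is supported in a boundary strip of measure $O(h)$ on which $|\n v^\e|$ carries the contribution $C\f{1}{h}\|u\|_{L^\infty}$ from differentiating the cutoff, so this piece is bounded only by $C\,h\cdot\|u\|^2_{L^\infty}\cdot\f{1}{h}\|u\|_{L^\infty}=C\|u\|^3_{L^\infty}$, which neither fits the claimed bound nor tends to zero as $h,\e\to0$. The repair is exactly the paper's choice: keep the plain $u$ as the convecting field and put $\theta_h$ on the other factor, so the main term vanishes identically and the two commutator integrals alone produce the second term of the stated estimate.
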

\begin{proof}
We first write $J_2$ as
\begin{equation*}
J_{2}=\langle u\otimes u:\n\theta_h\otimes\left((\theta_h u)^{\e}\right)^{\e}\rangle_x+\langle u\otimes u:\theta_h\n\left((\theta_h u)^{\e}\right)^{\e}\rangle_x=:J_{21}+J_{22}\,.
\end{equation*}
For $J_{21}$, by Lemma \ref{lem}, one can get
\begin{equation*}
\begin{split}
|J_{21}|=&|\langle u\otimes u:\n\theta_h\otimes\left((\theta_h u)^{\e}\right)^{\e}\rangle_x|\\
=&\left|\int_{\Omega_h}\left(u\cdot\n\theta_h\right)(u\cdot\left((\theta_h u)^{\e}\right)^{\e})dx\right|\\
\leq&C\left(\log\f{1}{h}\right)^{-\la}h^{\al}\|u\|_{C^{0,\al}_{\la}}\|u\|^2_{L^{\infty}}\,.
\end{split}
\end{equation*}
For $J_{22}$, one has
\begin{equation*}
\begin{split}
J_{22}=&\langle u\otimes u:\theta_h\n\left((\theta_h u)^{\e}\right)^{\e}\rangle_x|\\
=&|\langle u\otimes \theta_hu:\n\left((\theta_h u)^{\e}\right)^{\e}\rangle_x|\\
=&\langle \left(u\otimes \theta_hu\right)^{\e}:\n(\theta_h u)^{\e}\rangle_x\\
=&\langle \left(\left(u\otimes \theta_hu\right)^{\e}-\left(u^{\e}\otimes(\theta_hu)^{\e}\right)\right):\n(\theta_h u)^{\e}\rangle_x\,,
\end{split}
\end{equation*}
where we have used that
\begin{equation*}
\int_{\R^n_x} u^{\e}\otimes (\theta_hu)^{\e}:\n (\theta_hu)^{\e}dx=\int u^{\e}\cdot\n \f12|(\theta_hu)^{\e}|^2dx=\int \f12|(\theta_hu)^{\e}|^2\n\cdot u^{\e} dx=0\,.
\end{equation*}
By using the identity
\begin{equation*}
(v\otimes w)^{\e}(x)-(v^{\e}\otimes w^{\e})(x)=\int_{\R^n_y}(\delta_{y}v\otimes\delta_yw)(x)\phi_{\e}(y)dy-(v-v^{\e})(x)\otimes(w-w^{\e})(x)\,,
\end{equation*}
where
\begin{equation*}
(\delta_{y})v(x)=v(x-y)-v(x)\,,\quad (\delta_{y})w(x)=w(x-y)-w(x)\,,
\end{equation*}
one can write $J_{22}=J_{221}+J_{222}$ with
\begin{equation*}
\begin{split}
J_{221}=&\int_{\R^n_x}\left(\int_{\R^n_y}(\delta_{y}u\otimes\delta_y(\theta_hu))(x)\phi_{\e}(y)dy\right):\left(\int_{\R^n_z}\n\phi_{\e}(z)\otimes(\theta_hu)(x-z)dz\right)dx\\
=&\int_{\Omega}\left(\int_{\R^n_y}(\delta_{y}u\otimes\delta_y(\theta_hu))(x)\phi_{\e}(y)dy\right):\left(\int_{\R^n_z}\n\phi_{\e}(z)\otimes(\theta_hu)(x-z)dz\right)dx\,.
\end{split}
\end{equation*}
and
\begin{equation*}
\begin{split}
J_{222}=&\int_{\R^n_x}\left((u-u^{\e})\otimes((\theta_hu)-(\theta_hu)^{\e})\right):\n(\theta_hu)^{\e}dx\\
=&\int_{\Omega}\left((u-u^{\e})\otimes((\theta_hu)-(\theta_hu)^{\e})\right):\n(\theta_hu)^{\e}dx\,.
\end{split}
\end{equation*}
For $J_{221}$, noting that supp $\phi_{\e}\subset\{y:|y|\leq\e\}$, that $|\delta_y\theta_h(x)|\leq C\f{\e}{h}$ for all $|y|\leq\e$, and that $\int_{\R^n_z}|\n\phi_{\e}|dz\leq C\e^{-1}$, one shows that
\begin{equation*}
\begin{split}
&\left|\int_{\R^n_y}(\delta_{y}u\otimes\delta_y(\theta_hu))(x)\phi_{\e}(y)dy\right|\\
&=\left|\int_{\R^n_y}(\delta_{y}u\otimes\left(\theta_h(x-y)(\delta_yu)(x)+(\delta_y\theta_h)(x)u(x-y)\right)\phi_{\e}(y)dy\right|\\
&\leq C\left(\log\f{1}{\e}\right)^{-\la}\e^{\al}\|u\|_{C^{0,\al}_{\la}}\int_{\R^n_y}\left(\left(\log\f{1}{\e}\right)^{-\la}\e^{\al}\|u\|_{C^{0,\al}_{\la}}+\f{\e}{h}\|u\|_{L^{\infty}}\right)\phi_{\e}(y)dy\\
&=C\left(\log\f{1}{\e}\right)^{-\la}\e^{\al}\|u\|_{C^{0,\al}_{\la}}\left(\left(\log\f{1}{\e}\right)^{-\la}\e^{\al}\|u\|_{C^{0,\al}_{\la}}+\f{\e}{h}\|u\|_{L^{\infty}}\right)\,,
\end{split}
\end{equation*}
and
\begin{equation}\label{eq:J201}
\begin{split}
&\left|\int_{\R^n_z}\n\phi_{\e}(z)\otimes(\theta_hu)(x-z)dz\right|\\
=&\left|\int_{\R^n_z}\n\phi_{\e}(z)\otimes\left((\theta_hu)(x-z)-(\theta_hu)(x)\right)dz\right|\\
=&\left|\int_{\R^n_z}\n\phi_{\e}(z)\otimes\left(\delta_z\theta_h(x)u(x-z)-\theta_h(x)\delta_zu(x)\right)dz\right|\\
\leq&C\left(\left(\log\f{1}{\e}\right)^{-\la}\e^{\al}\|u\|_{C^{0,\al}_{\la}}+\f{\e}{h}\|u\|_{L^{\infty}}\right)\int_{\R^n_z}|\n\phi_{\e}(z)|dz\\
\leq& C\e^{-1}\left(\left(\log\f{1}{\e}\right)^{-\la}\e^{\al}\|u\|_{C^{0,\al}_{\la}}+\f{\e}{h}\|u\|_{L^{\infty}}\right)\,.
\end{split}
\end{equation}
Hence, one has
\begin{equation*}
|J_{221}|\leq C\left(\log\f{1}{\e}\right)^{-\la}\e^{\al-1}\|u\|_{C^{0,\al}_{\la}}\left(\left(\log\f{1}{\e}\right)^{-\la}\e^{\al}\|u\|_{C^{0,\al}_{\la}}+\f{\e}{h}\|u\|_{L^{\infty}}\right)^2\,.
\end{equation*}
For $J_{222}$, it follows from \eqref{eq:J201} that
\begin{equation}\label{eq:J202}
\begin{split}
|\n(\theta_hu)^{\e}(x)|=&\left|\int_{\R^n_z} \n\phi_{\e}(z)\otimes(\theta_hu)(x-z)dz\right|\\
\leq& C\e^{-1}\left(\left(\log\f{1}{\e}\right)^{-\la}\e^{\al}\|u\|_{C^{0,\al}_{\la}}+\f{\e}{h}\|u\|_{L^{\infty}}\right)\,.
\end{split}
\end{equation}
On the other hand, for all $x\in\text{supp}\,\theta_{h+\e}$, one has
\begin{equation}\label{eq:J203}
|u(x)-u^{\e}(x)|=\left|\int_{\R^n_y}(u(x)-u(x-y))\phi_{\e}(y)dy\right|\leq \left(\log\f{1}{\e}\right)^{-\la}\e^{\al}\|u\|_{C^{0,\al}_{\la}}\,,
\end{equation}
and
\begin{equation}\label{eq:J204}
\begin{split}
&|(\theta_hu)(x)-(\theta_hu)^{\e}(x)|\\
=&\left|\int_{\R^n_y}((\theta_hu)(x)-(\theta_hu)(x-y))\phi_{\e}(y)dy\right|\\
=&\left|\int_{\R^n_y}\phi_{\e}(y)\left(\delta_y\theta_h(x)u(x-y)-\theta_h(x)\delta_yu(x)\right)dy\right|\\
\leq&C\left(\left(\log\f{1}{\e}\right)^{-\la}\e^{\al}\|u\|_{C^{0,\al}_{\la}}+\f{\e}{h}\|u\|_{L^{\infty}}\right)\,.
\end{split}
\end{equation}
Combining with \eqref{eq:J202}-\eqref{eq:J204}, one gets
\begin{equation*}
|J_{222}|\leq C\left(\log\f{1}{\e}\right)^{-\la}\e^{\al-1}\|u\|_{C^{0,\al}_{\la}}\left(\left(\log\f{1}{\e}\right)^{-\la}\e^{\al}\|u\|_{C^{0,\al}_{\la}}+\f{\e}{h}\|u\|_{L^{\infty}}\right)^2\,.
\end{equation*}
Now, collecting the above estimates obtained for $J_{21}$, $J_{221}$, and $J_{222}$, one obtains the desired estimate for $J_2$.
\end{proof}
Finally, we estimate $J_3$.
\begin{proposition}\label{prop:J3}
One has
\begin{equation*}
|\langle p, \n\cdot\left(\theta_h\left((\theta_hu)^{\e}\right)^{\e}\right)\rangle_x|\leq C\|p\|_{L^{\infty}}\|u\|_{C^{0,\al}_{\la}}\left(\left(\log\left(\f{1}{h}\right)\right)^{-\la}h^{\al}+\left(\log\left(\f{1}{\e}\right)\right)^{-\la}\e^{\al}\right)\,.
\end{equation*}
\end{proposition}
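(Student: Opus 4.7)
The plan is to distribute the divergence by the product rule and split $J_3 = J_{31} + J_{32}$, where $J_{31} = \int p\,\n\theta_h \cdot ((\theta_h u)^\e)^\e\,dx$ and $J_{32} = \int p\,\theta_h\,\n\cdot((\theta_h u)^\e)^\e\,dx$. The piece $J_{32}$ is handled easily: since mollification commutes with differentiation, and $\n\cdot u = 0$ gives $\n\cdot(\theta_h u) = \n\theta_h \cdot u$, one has $\n\cdot((\theta_h u)^\e)^\e = ((\n\theta_h\cdot u)^\e)^\e$. Coupling $\|p\theta_h\|_{L^\infty}\leq\|p\|_{L^\infty}$ with the $L^1$-contraction of mollification and Lemma \ref{lem} estimate \eqref{eq:lem2} yields $|J_{32}| \leq C\|p\|_{L^\infty}\|u\|_{C^{0,\al}_\la}(\log(1/h))^{-\la} h^\al$.

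For $J_{31}$ I would add and subtract $\theta_h u$. The piece $\int p\,\theta_h(\n\theta_h\cdot u)\,dx$ is controlled directly by Lemma \ref{lem}. The remaining term hinges on the double-mollification error, which I would decompose as
\begin{equation*}
((\theta_h u)^\e)^\e(x) - (\theta_h u)(x) = A(x) + B(x),
\end{equation*}
with $A(x) := \int(\phi_\e\ast\phi_\e)(y)\,\theta_h(x-y)[u(x-y)-u(x)]\,dy$ and $B(x) := u(x)\int(\phi_\e\ast\phi_\e)(y)[\theta_h(x-y)-\theta_h(x)]\,dy$. The H\"olog modulus of $u$ together with $|\theta_h|\leq 1$ gives $|A(x)|\leq C(\log(1/\e))^{-\la}\e^\al\|u\|_{C^{0,\al}_\la}$, and combined with $\int|\n\theta_h|\,dx\leq C$ this accounts for the $(\log(1/\e))^{-\la}\e^\al$ term in the stated estimate.

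The main obstacle is the $B$ contribution: the crude bound $|B(x)|\leq C(\e/h)\|u\|_{L^\infty}$ would introduce an $\e/h$ factor paired with $\|u\|_{L^\infty}$, incompatible with the stated bound. The decisive trick, mirroring the proof of Lemma \ref{lem}, is to exploit the explicit representation $\n\theta_h(x) = -h^{-1}\eta'(d(x)/h)\,n(\sigma(x))$ on its support, so that $\n\theta_h\cdot B$ only involves the normal component $n(\sigma(x))\cdot u(x)$. The slip condition $u(\sigma(x))\cdot n(\sigma(x)) = 0$ then gives $|n(\sigma(x))\cdot u(x)| = |n(\sigma(x))\cdot[u(x)-u(\sigma(x))]|\leq C\|u\|_{C^{0,\al}_\la}(\log(1/h))^{-\la} h^\al$ for $x\in\mathrm{supp}\,\n\theta_h$ (where $d(x)\in[h/2,h]$). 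Combining this with the $\e/h$ factor from the $\theta_h$-increment, the volume bound $|\Omega_h|/h\leq C$, and the standing assumption $\e < h/4$ (so $\e h^{\al-1}\leq h^\al$), the $B$-contribution is absorbed into $C\|p\|_{L^\infty}\|u\|_{C^{0,\al}_\la}(\log(1/h))^{-\la} h^\al$. Summing all three pieces yields the claimed estimate.
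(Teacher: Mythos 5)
Your proof is correct, and its skeleton coincides with the paper's: the same product-rule split into $\int p\,\theta_h\,\n\cdot((\theta_h u)^\e)^\e\,dx$ and $\int p\,\n\theta_h\cdot((\theta_h u)^\e)^\e\,dx$, the same use of $\n\cdot(\theta_h u)=u\cdot\n\theta_h$ together with estimate \eqref{eq:lem2} for the first piece, and the same mollification-error analysis for the second. The one genuine divergence is in how you decompose the second piece. The paper subtracts $u(x)$ \emph{inside} the double mollification while keeping the factor $\theta_h(x-y+z)$ intact; the leftover term is then $\bigl(\int\!\!\int\phi_\e\phi_\e\,\theta_h(x-y+z)\,dydz\bigr)\,u(x)\cdot\n\theta_h(x)$, which is handled by bounding $\theta_h\le 1$ and applying \eqref{eq:lem2} directly, so no $\theta_h$-increment ever appears. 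You instead subtract the full product $\theta_h(x)u(x)$, which produces the extra term $B$ carrying the increment $\theta_h(x-y)-\theta_h(x)$; the crude bound $|B|\lesssim(\e/h)\|u\|_{L^\infty}$ would indeed be useless here, and your rescue --- observing that $\n\theta_h\cdot B$ only sees the normal component of $u$, invoking the slip condition as in the proof of Lemma \ref{lem} to gain a factor $(\log(1/h))^{-\la}h^{\al}$, and then spending $\e<h/4$ and $|\Omega_h|\le Ch$ --- is correct and closes the gap. The paper's centering is slightly more economical (no extra term to absorb, no use of $\e<h/4$ in this proposition); your version costs one additional argument but requires no new ingredients beyond what Lemma \ref{lem} already contains.
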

\begin{proof}
First, one has
\begin{equation*}
\begin{split}
\langle p, \n\cdot\left(\theta_h\left((\theta_hu)^{\e}\right)^{\e}\right)\rangle_x=&\int_{\Omega}p\n\cdot\left(\theta_h\left((\theta_hu)^{\e}\right)^{\e}\right)dx\\
=&\int_{\Omega}(p\theta_h)\n\cdot\left((\theta_hu)^{\e}\right)^{\e}dx+\int_{\Omega}p\n\theta_h\cdot\left((\theta_hu)^{\e}\right)^{\e}dx\\
=&:J_{31}+J_{32}\,.
\end{split}
\end{equation*}
Concerning $J_{31}$, from (2.22) and (2.25) in Proposition 2.3 of \cite{BT}, one obtains
\begin{equation*}
J_{31}=\int_{\Omega}\Big(p(x)\theta_h(x)\int_{\R^n_y}\int_{\R^n_z}\phi_{\e}(x-y)\phi_{\e}(z-y)u(z)\cdot\n\theta_h(z)dzdy\Big)dx\,,
\end{equation*}
by Lemma \ref{lem}, which implies that
\begin{equation*}
|J_{31}|\leq C\|p\|_{L^{\infty}}\|u\|_{C^{0,\al}_{\la}}\left(\log\left(\f{1}{h}\right)\right)^{-\la}h^{\al}\,.
\end{equation*}
For $J_{32}$, as in \cite{BT}, one has
\begin{equation*}
\begin{split}
J_{32}=&\int_{\Omega_h}\left(p(x)\n\theta_h(x)\cdot\int_{\R^n_z}\int_{\R^n_y}\theta_h(x-y+z)u(x-y+z)\phi_{\e}(y)\phi_{\e}(z)dydz\right)dx\,,\\
=&\int_{\Omega_h}p(x)\left(\int_{\R^n_z}\int_{\R^n_y}\phi_{\e}(y)\phi_{\e}(z)\theta_h(x-y+z)\left(u(x-y+z)-u(x)\right)\cdot\n\theta_h(x)dydz\right)dx\\
&+\int_{\Omega_h}p(x)\left(\int_{\R^n_z}\int_{\R^n_y}\phi_{\e}(y)\phi_{\e}(z)\theta_h(x-y+z)u(x)\cdot\n\theta_h(x)dydz\right)dx\\
=&:J_{321}+J_{322}\,.
\end{split}
\end{equation*}
Noting that
$$
|u(x-y+z)-u(x)|\leq C\|u\|_{C^{0,\al}_{\la}}\left(\log\left(\f{1}{\e}\right)\right)^{-\la}\e^{\al}\,,
$$
for the relevant $x,\,y,\,z$ for which the integrand in the definition of $J_{321}$ is not zero, and that $\int_{\Omega_h}|\n\theta_h(x)|dx\leq C\,,$ one shows that
\begin{equation*}
|J_{321}|\leq C\|p\|_{\infty}\|u\|_{C^{0,\al}_{\la}}\left(\log\left(\f{1}{\e}\right)\right)^{-\la}\e^{\al}\,.
\end{equation*}
Concerning $J_{322}$, it follows from Lemma \ref{lem} that
\begin{equation*}
\begin{split}
|J_{322}|\leq& \int_{\Omega_h}|p(x)|\left(\int_{\R^n_z}\int_{\R^n_y}\phi_{\e}(y)\phi_{\e}(z)|u(x)\cdot\n\theta_h(x)|dydz\right)dx\\
\leq&  C\|p\|_{\infty}\|u\|_{C^{0,\al}_{\la}}\left(\log\left(\f{1}{h}\right)\right)^{-\la}h^{\al}\,.
\end{split}
\end{equation*}
Collecting the above estimates, one proves the proposition.
\end{proof}
Now, it follows from Propositions \ref{prop:J2}, \ref{prop:J3} and the estimate \eqref{estimate:p} in Proposition \ref{prop:reg} that
\begin{equation}\label{eq:proof}
\begin{split}
\int_{t_1}^{t_2}|J_2+J_3|dt\leq C\left(\left(\log\left(\f{1}{h}\right)\right)^{-\la}h^{\al}+\left(\log\left(\f{1}{\e}\right)\right)^{-\la}\e^{\al}
\right)\int_{t_1}^{t_2}\|u\|^3_{C^{0,\al}_{\la}}dt\\
+\left(\left(\log\left(\f{1}{\e}\right)\right)^{-3\la}\e^{3\al-1}+
\left(\log\left(\f{1}{\e}\right)\right)^{-\la}\f{\e^{\al+1}}{h^2}\right)\int_{t_1}^{t_2}\|u\|^3_{C^{0,\al}_{\la}}dt\,.
\end{split}
\end{equation}
by choosing $\e=h^{\f{2}{1+\al}}$ and by letting $h\to0$, since $\al\geq\f13$ and $\la>0$, one has $\int_{t_1}^{t_2}|J_2+J_3|dt\to 0$. Combining this fact with Proposition \ref{prop:J1} and equation \eqref{eq:test}, one proves the Theorem \ref{theorem}.


\begin{thebibliography}{1}
\bibitem{BT}
Bardos, C., Titi, E. S.: Onsager's conjecture for the incompressible Euler equations in bounded domains, \textit{Archive for Rational Mechanics and Analysis}, \textbf{228}(1), 197--207 2018
\bibitem{BGGTW}
Bardos, C., Gwiazda, P., \'{S}wierczewska-Gwiazda, A., Titi, E. S., Wiedemann, E.: On the extension of Onsager's conjecture for general conservation laws, \textit{Journal of Nonlinear Science}, \textbf{29}(2), 501--510 2019
\bibitem{BTW}
Bardos, C., Titi, E. S., Wiedemann, E.: Onsager's conjecture with physical boundaries and an application to the vanishing viscosity limit, \textit{Communications in Mathematical Physics}, \textbf{370}(1), 291--310 2019
\bibitem{BV}
Beir\~{a}o da Veiga, H.: Moduli of continuity, functional spaces,\break and elliptic boundary value problems, The full regularity spaces $C^{0,\la}_{\al}$, \textit{Advances in Nonlinear Analysis}, \textbf{7}(1), 15--34 2018
\bibitem{BDSV}
Buckmaster, T., De Lellis, C., Sz\'{e}kelyhidi, L.Jr., Vicol, V.: Onsager's conjecture for admissible weak solutions, arXiv:1701.08678.
\bibitem{CCFS}
Cheskidov, A.,  Constantin, P., Friedlander, S., Shvydkoy, R.: Energy conservation and Onsager's conjecture for the Euler equations, \textit{Nonlinearity}, \textbf{21}(6),1233--1252 2008
\bibitem{CET}
Constantin, P.,W. E., Titi, E.S.: Onsager's conjecture on the energy conservation for solutions of Euler's equation, \textit{Commun. Math. Phys.}, \textbf{165}, 207--209 1994
\bibitem{Eyink}
Eyink, G.L.: Energy dissipation without viscosity in ideal hydrodynamics, I. Fourier analysis and local energy transfer, \textit{Phys. D}, \textbf{78}(3-4), 222--240 1994
\bibitem{Isett}
Isett, P.: A proof of Onsager's conjecture, \textit{Annals of Mathematics}, \textbf{188}(3), 871--963 2018
\bibitem{Isett1}
Isett, P.: On the endpoint regularity in Onsager's conjecture, arXiv:1706.01549.
\bibitem{N}
Nguyen, Q. H., Nguyen, P. T.: Onsager's conjecture on the energy conservation for solutions of Euler equations in bounded domains, \textit{Journal of Nonlinear Science}, \textbf{29}(1), 207-213 2019
\bibitem{Onsager}
Onsager, L.: Statistical hydrodynamics, \textit{Nuovo Cimento}, \textbf{(9) 6}, Supplemento, 2(ConvegnoInternazionale di Meccanica Statistica), 279--287 1949.
\bibitem{S}
Shvydkoy R.: On the energy of inviscid singular flows, \textit{J. Math. Anal. Appl.}, \textbf{349}, 583--595 2009
\bibitem{Stein}
 Stein, E. M.: Singular Integrals and Differentiability Properties of Functions (Vol. 2). Princeton University Press, 1970.
\end{thebibliography}
\end{document}